\newtheorem{thm}{Theorem}[section]
\newtheorem{lem}[thm]{Lemma}
\newtheorem{pro}[thm]{Proposition}
\newtheorem{ex}[thm]{Example}
\theoremstyle{definition}
\newtheorem{rem}[thm]{Remark}
\begin{document}
\title{Upper level sets of Lelong numbers on $\mathbb P^2$ and cubic curves}

\author{Al\.i Ula\c{s} \"Ozg\"ur K\.i\c{s}\.isel}
\address{Department  of Mathematics,  Middle East  Technical University,  06800 Ankara,  Turkey }
\email{akisisel@metu.edu.tr}

\address{Department  of Mathematics,  Middle East  Technical University,  06800 Ankara,  Turkey }
\email{oyazici@metu.edu.tr}

\author{Ozcan Yazici}
\address{} 
\email{}
\date{\today}

\begin{abstract} Let $T$ be a positive closed current of bidimension $(1,1)$ with unit mass on $\mathbb P^2$ and $V_{\alpha}(T)$ be the upper level sets of Lelong numbers $\nu(T,x)$ of $T$. For any $\alpha\geq \frac{1}{3}$, we show that $|V_{\alpha}(T)\setminus C|\leq 2$ for some cubic curve $C$.  \\

\noindent \textbf{2010 Mathematics Subject Classification:} 32U05, 32U25, 32U35, 32U40.\\
\noindent \textbf{Keywords:} {Positive closed currents, Lelong numbers, Cubic curves}. \\

 \end{abstract}
\maketitle
\section{Introduction}

Suppose that $T$ is a positive closed current of bidimension $(1,1)$ on the complex projective plane $\mathbb{P}^2$ such that $$||T||=\int_{\mathbb P^2}T\wedge \omega=1$$ where $\omega$ is the Fubini-Study form on $\mathbb P^2$. The Lelong number of $T$ at a point $x\in \mathbb{P}^2$ is defined  by  $$\displaystyle \nu(T,x)=\lim_{r\to 0}\frac{1}{\pi r^2}\int_{B(x,r)}T\wedge \omega$$ where $B(x,r)$ is the Euclidean ball with center $x$ and  radius $r>0$ (See \cite{Dem} for equivalent definitions and properties of Lelong numbers).

For $0<\alpha<1$, upper level sets of Lelong numbers $\nu(T,.)$ are defined by  
\begin{eqnarray*} 
E_{\alpha}(T):= \{x\in \mathbb{P}^{2} | \nu(T,x)\geq \alpha \}  \\ 
V_{\alpha}(T):= \{x\in \mathbb{P}^{2} | \nu(T,x) > \alpha \}.  \\ 
\end{eqnarray*} 
It is a remarkable result of Siu (\cite{Siu}) that  $E_{\alpha}(T)$ is   an analytic subvariety of $\mathbb P^2$ of dimension at most $1$ for any $\alpha >0$. It follows from Chow's theorem that $E_{\alpha}(T)$ is an algebraic variety. 

In \cite{Com}, Coman proved that if $\alpha \geq \frac{1}{2}$, $|V_{\alpha}(T)\setminus L|\leq 1$ for some complex line $L$  and if $\alpha \geq \frac{2}{5}$, $|V_{\alpha}(T)\setminus C|\leq 1$ for some conic. In \cite{Com, CT, Hef1, Hef2}, further geometric results are obtained for the sets $V_{\alpha}(T)$ where $T$ is a positive closed current of bidimension $(p,p)$ in $\mathbb P^n$ and $p\geq1.$  
Our goal here is to understand the geometry of level sets $V_{\alpha}(T)$ for smaller $\alpha>0$. In this direction, we show that for any $\alpha \geq \frac{1}{3}$, $|V_{\alpha}(T)\setminus C|\leq 2$ for some cubic curve $C$. More precisely, our main result is the following:

\begin{thm} \label{0}
If $\alpha \geq 1/3$ then either
\begin{itemize} 
\item  $|V_{\alpha}(T)\setminus C|\leq 1$ for some (possibly reducible) conic,    
\item or $V_{\alpha}(T)$ is a finite set and $|V_{\alpha}(T) \setminus C|\leq 2 $ for some cubic $C$. 
\end{itemize} 
\end{thm} 

The proof of our main result is based on the construction of entire plurisubharmonic functions of logarithmic growth and with weighted  logarithmic poles at some points of $\mathbb C^2$ (See Lemma \ref{2}). In doing so, we followed the ideas in \cite{Com}, where such plurisubharmonic functions with smaller number of poles were constructed. 
\section{Preliminaries}

We say that a plurisubharmonic function $u\in PSH(\mathbb C^n)$ has logarithmic growth if $$\gamma_u=\lim_{||z||\to\infty} \sup\frac{u(z)}{\log||z||}<\infty.  $$

A plurisubharmonic function $u\in PSH(\mathbb C^n)$ is said to have logarithmic pole of weight $\alpha>0$ at $p\in \mathbb C^n$ if $$u(z)=\alpha\log||z-p ||+\mathcal O(1)$$ in an open neighborhood of $p$.  

In the following example, we show that the number $1/3$ in the theorem is optimal.
\begin{ex} 
Suppose that $0<\alpha<1/3$. Consider $6$ generic lines $L_{1},\ldots,L_{6}$ in $\mathbb{P}^2$ and consider the set $S$ of $15$ points of pairwise intersections of these lines. Let 
\[ T:= \frac{1}{6} \sum_{j=1}^{6} [L_{j}]  \]
Then, clearly $||T||=1$ . For each $p\in S$, we have $\nu(T,p)=1/3$. Therefore, since $\alpha<1/3$, we must have $p\in V_{\alpha}(T)$ and consequently $S\subseteq V_{\alpha}(T)$. Let us now show that $V_{\alpha}(T)$ cannot be contained in the union of a cubic and two points: Let $S=\{x_1,\dots,x_{15}\}$ and $\Gamma$ be a cubic containing $S$ except two points, say $x_1,x_2$. We may assume that  $L_1$ and $L_2$ do not contain any of the points $x_1$ and $x_2$. Then Bezout's theorem shows that $C=L_1\cup L_2 \cup L$ for some line $L$. But this implies that the $4$ points in $S\setminus (L_1\cup L_2\cup \{x_1,x_2\})$ are collinear, which contradicts to the configuration of points of $S$.   
\end{ex}

The following proposition plays an important role in our proof of the main theorem.
 
\begin{pro}[Proposition 2.1 in \cite{Com}] \label{prop1} Let $S=\{x_1,\dots,x_k \}\subset \mathbb C^n$ and $T$ be a positive closed current on $\mathbb P^n$ of bidimension $(1,1)$ with unit mass. If $u\in PSH(\mathbb C^n)$ has logarithmic growth, it is locally bounded outside a finite set, and $u(z)\leq\alpha_j\log||z-x_j ||+\mathcal O(1)$ for $z$ near $x_j$ where $\alpha_j>0$, $1\leq j\leq k$, then $$\sum_{j}^k\alpha_j\nu(T,x_j)\leq \gamma_u. $$
\end{pro}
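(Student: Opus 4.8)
The plan is to study the positive measure $\mu := T \wedge dd^c u$ and to trap its total mass between the two quantities in the inequality. First I would check that $\mu$ is a well-defined positive measure on $\mathbb{C}^n$: since $u$ is locally bounded outside a finite (hence zero-dimensional) set and $T$ is a positive closed current, Demailly's intersection theory (cf. \cite{Dem}) makes $dd^c u \wedge T$ meaningful as a positive measure, with no anomalous mass concentrated away from the points $x_j$. The whole argument then reduces to the two-sided estimate
$$\sum_{j=1}^k \alpha_j\,\nu(T,x_j)\ \le\ \|\mu\|\ \le\ \gamma_u.$$

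For the lower bound I would invoke the comparison theorem for Lelong numbers of intersection products. The pole hypothesis $u(z)\le \alpha_j\log\|z-x_j\|+\mathcal O(1)$ forces $\nu(u,x_j)=\nu(dd^c u,x_j)\ge \alpha_j$ (dividing by the negative quantity $\log\|z-x_j\|$ reverses the inequality in the limit). The comparison theorem then yields $\nu(\mu,x_j)\ge \nu(dd^c u,x_j)\,\nu(T,x_j)\ge \alpha_j\,\nu(T,x_j)$. Since $\mu$ has bidegree $(n,n)$, its Lelong number at a point is exactly its atomic mass there, and a positive measure assigns total mass at least the sum of its atoms; hence $\sum_j \alpha_j\nu(T,x_j)\le \sum_j \mu(\{x_j\})\le \|\mu\|$.

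For the upper bound I would use the logarithmic growth to control mass at infinity. The cleanest route is to extend $dd^c u$ across the hyperplane at infinity $H_\infty=\mathbb P^n\setminus\mathbb C^n$ to a positive closed $(1,1)$-current $\overline S$ on $\mathbb P^n$; the condition $\gamma_u<\infty$ guarantees that $u$ is dominated near infinity by $(\gamma_u+\epsilon)$ times the Fubini–Study potential $\tfrac12\log(1+\|z\|^2)$, so that $\overline S$ has degree $\gamma_u$, i.e. cohomology class $\gamma_u\{\omega\}$. As $T$ and $\overline S$ are positive closed of complementary bidimension, the cohomological pairing gives $\int_{\mathbb P^n}T\wedge\overline S=\|T\|\cdot\gamma_u=\gamma_u$, and restricting this positive measure to the affine part only decreases mass, so $\|\mu\|\le \gamma_u$. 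Concretely I would realize this via Stokes' theorem: using $dT=0$, the affine mass $\int_{B_R}T\wedge dd^c u$ is a boundary integral over $\partial B_R$ which, after substituting $u\le(\gamma_u+\epsilon)\tfrac12\log(1+\|z\|^2)+C_\epsilon$, is bounded by $(\gamma_u+\epsilon)\int_{B_R}T\wedge\omega+o(1)\le \gamma_u+\epsilon$; letting $R\to\infty$ and $\epsilon\to0$ yields the claim, and combining with the lower bound finishes the proof.

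I expect the main obstacle to be the mass-at-infinity bookkeeping in the upper bound: one must ensure that extending $dd^c u$ to $\mathbb P^n$ (equivalently, controlling the boundary terms in the Stokes computation as $R\to\infty$) neither loses nor gains mass in a way that breaks the estimate, and that the extended product $T\wedge\overline S$ is genuinely well-defined near $H_\infty$. Establishing the exact value $\gamma_u$ for the degree of $\overline S$, together with the domination estimate near infinity, is the delicate analytic point; by contrast the local comparison of Lelong numbers and the reduction to a total-mass inequality are routine once Demailly's machinery is in hand.
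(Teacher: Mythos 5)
Your overall architecture is exactly that of the source this paper is quoting: the statement is imported verbatim from \cite{Com} (Proposition 2.1) and is not reproved here, and Coman's proof indeed runs through the positive measure $\mu = T\wedge dd^c u$, a lower bound on its atoms via Demailly's comparison theorem, and an upper bound on its affine mass via the logarithmic growth. Your lower-bound half is complete and correct: the product is well defined because the unbounded locus of $u$ is finite and $T$ has bidimension $(1,1)$; the pole hypothesis gives $\nu(dd^c u, x_j)\geq \alpha_j$; the comparison theorem (\cite[Corollary 5.10]{Dem}, the same result invoked elsewhere in this paper) gives $\nu(\mu,x_j)\geq \alpha_j\nu(T,x_j)$; and atoms are dominated by total mass.

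The gap is in the upper bound, and it is precisely at the point you flagged. Route one (extend $dd^c u$ to $\overline S$ on $\mathbb P^n$ and pair cohomologically) does not work as stated: the quasi-potential of $\overline S$ can tend to $-\infty$ along a positive-dimensional subset of $H_\infty$, and $\operatorname{supp} T$ may meet $H_\infty$ in a large set (it can even be contained in $H_\infty$), so $T\wedge \overline S$ need not be well defined near $H_\infty$ by any of the standard criteria; the identity $\int_{\mathbb P^n} T\wedge \overline S = \gamma_u\|T\|$ is then exactly what cannot be asserted. Route two (Stokes on $B_R$) is also not legitimate as written: the boundary term involves $d^c u$ integrated against $T$, not $u$ integrated against a positive measure, so an upper bound on $u$ cannot simply be substituted into it. The standard repair, which is essentially what Coman does, is a max-regularization: with $h(z)=\frac12\log(1+\|z\|^2)$ and $\varepsilon>0$ fixed, set $u_k=\max\bigl(u,\,(\gamma_u+\varepsilon)h-k\bigr)$. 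Then $u_k-(\gamma_u+\varepsilon)h$ is bounded on $\mathbb C^n$, hence extends to a bounded $(\gamma_u+\varepsilon)\omega$-plurisubharmonic function on $\mathbb P^n$, so $S_k=(\gamma_u+\varepsilon)\omega+dd^c\bigl(u_k-(\gamma_u+\varepsilon)h\bigr)$ is a positive closed $(1,1)$-current with bounded local potentials; the product $T\wedge S_k$ is therefore well defined on all of $\mathbb P^n$, does compute the cohomological pairing, and has total mass $\gamma_u+\varepsilon$, while on $\mathbb C^n$ it equals $T\wedge dd^c u_k$. Since $u_k\downarrow u$ and $u$ is locally bounded off a finite set, monotone convergence for Monge--Amp\`ere products gives $T\wedge dd^c u_k\to T\wedge dd^c u$ weakly on $\mathbb C^n$, whence $\|\mu\|_{\mathbb C^n}\leq\gamma_u+\varepsilon$, and $\varepsilon\to 0$ finishes. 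With that substitution for your upper bound, the rest of your plan goes through.
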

 In order to construct a plurisubharmonic function $u$ as in Proposition \ref{prop1}, we will use the following result of Coman-Nivoche.
 
 \begin {thm}[Theorem 4.1 in \cite{CN}]\label{cn} Let $r$ be a positive integer and $P,Q$ be polynomials such that $S=\{z\in \mathbb C^2: P(z)=Q(z)=0 \}$ and for any $x\in S$, $ord(P,x)\geq r$,   $ord(Q,x)\geq r$  and the intersection number $(P.Q)_x=r^2.$ Then $$u=\frac{1}{2r}\log(|P|^2+|Q|^2)$$ is a plurisubharmonic function in $\mathbb C^2$ with logarithmic growth, which is locally bounded outside of $S$ and has logarithmic pole of weight $1$ at the points of $S$. 
 
 \end{thm}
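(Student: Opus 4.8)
The plan is to establish the four asserted properties of $u=\frac{1}{2r}\log(|P|^2+|Q|^2)$ in turn, with essentially all of the work concentrated in the local analysis near a point of $S$. The first three are routine. Plurisubharmonicity holds because $\log(|P|^2+|Q|^2)=\log\|(P,Q)\|^2$ is psh for any holomorphic map, being the logarithm of a sum of squared moduli of holomorphic functions. Since $P,Q$ are polynomials, setting $d=\max(\deg P,\deg Q)$ gives $|P|^2+|Q|^2\leq C\|z\|^{2d}$ for large $\|z\|$, so $\gamma_u\leq d/r<\infty$ and $u$ has logarithmic growth. Finally, away from $S$ at least one of $P,Q$ is nonzero, so $|P|^2+|Q|^2$ is a strictly positive continuous function there and $u$ is locally bounded outside $S$.

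The heart of the matter is to show that at each $x\in S$ one has $u(z)=\log\|z-x\|+\mathcal O(1)$, equivalently $|P|^2+|Q|^2\asymp\|z-x\|^{2r}$ near $x$. Here I would invoke the classical lower bound for the intersection multiplicity of plane curves, $(P.Q)_x\geq ord(P,x)\cdot ord(Q,x)$, with equality precisely when the initial (lowest-degree homogeneous) forms of $P$ and $Q$ at $x$ share no common linear factor, i.e. no common tangent direction. Since by hypothesis $ord(P,x)\geq r$, $ord(Q,x)\geq r$ and $(P.Q)_x=r^2$, the chain $r^2=(P.Q)_x\geq ord(P,x)\cdot ord(Q,x)\geq r^2$ must consist entirely of equalities. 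This simultaneously forces $ord(P,x)=ord(Q,x)=r$ and forces the degree-$r$ initial forms $P_r,Q_r$, homogeneous in $w=z-x$, to be coprime as binary forms.

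With this in hand the estimate is a compactness argument. Because $P_r,Q_r$ have no common zero in $\mathbb P^1$, the function $|P_r(w)|^2+|Q_r(w)|^2$ is nonvanishing on the unit sphere, so by homogeneity $c\|w\|^{2r}\leq|P_r(w)|^2+|Q_r(w)|^2\leq C\|w\|^{2r}$ for constants $0<c\leq C$. Writing $P(z)=P_r(w)+\mathcal O(\|w\|^{r+1})$ and similarly for $Q$, the cross terms and higher-order corrections contribute only $\mathcal O(\|w\|^{2r+1})$, so for $\|w\|$ small enough $\tfrac{c}{2}\|w\|^{2r}\leq|P|^2+|Q|^2\leq 2C\|w\|^{2r}$; applying $\frac{1}{2r}\log$ yields $u(z)=\log\|z-x\|+\mathcal O(1)$, a logarithmic pole of weight $1$. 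I expect the main obstacle to be the careful justification of the intersection-multiplicity equality criterion and its use to extract the coprimality of the initial forms (this is where the precise hypothesis $(P.Q)_x=r^2$ is consumed); once the orders are pinned to $r$ and $P_r,Q_r$ are coprime, the remaining estimates are standard.
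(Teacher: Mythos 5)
The paper itself gives no proof of this statement: it is imported verbatim as Theorem 4.1 of Coman--Nivoche \cite{CN}, so there is no internal argument to compare yours against; the comparison can only be with the cited source. Judged on its own, your proof is correct and complete. The first three properties (plurisubharmonicity, $\gamma_u\leq \max(\deg P,\deg Q)/r$, local boundedness off $S$) are routine, exactly as you say. The substantive step is handled correctly: the classical property of plane-curve intersection numbers, $(P.Q)_x\geq ord(P,x)\,ord(Q,x)$ with equality precisely when the initial forms at $x$ share no linear factor, combined with the hypotheses $ord(P,x),ord(Q,x)\geq r$ and $(P.Q)_x=r^2$, squeezes the chain $r^2=(P.Q)_x\geq ord(P,x)\,ord(Q,x)\geq r^2$ into equalities; this pins both orders to $r$ and makes $P_r,Q_r$ coprime binary forms, hence without common zeros away from the origin. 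Compactness of the unit sphere plus homogeneity then gives $c\|w\|^{2r}\leq |P_r(w)|^2+|Q_r(w)|^2\leq C\|w\|^{2r}$, and your error bookkeeping is right: the cross terms are $\mathcal O(\|w\|^{2r+1})$ and the squared tails $\mathcal O(\|w\|^{2r+2})$, so $|P|^2+|Q|^2\asymp\|z-x\|^{2r}$ near $x$, which after applying $\frac{1}{2r}\log$ is exactly a logarithmic pole of weight $1$ in the sense defined in Section 2 (the two-sided bound matters here, and you have it). One point worth making explicit, though it costs only a sentence: finiteness of $(P.Q)_x$ at each $x\in S$ already forces $P$ and $Q$ to have no common component, so $S$ is automatically finite --- which is what allows $u$ to be used in Proposition \ref{prop1}, where ``locally bounded outside a finite set'' is required.
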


\section{Proof of the Main Result}  

Assume from now on that $S$ is a subset of $\mathbb{P}^2$ such that $|S|=12$  where $|S|$  denotes the number of points in $S$. Say $S=\{x_{1},\ldots,x_{12}\}$. Let $\mathcal{X}$ be the subspace of $\mathbb{C}[X,Y,Z]$ containing all homogeneous polynomials $P(X,Y,Z)$ of degree $6$ such that $P$ vanishes on all points of $S$, and the first partial derivatives of $P$ vanish on $x_{1},\ldots,x_{6}$. Each of these vanishings imposes a linear condition on the coefficients of $P$; the totality of these conditions may or may not be linearly independent. We can then estimate the dimension of $\mathcal{X}$ from below: 
\[  dim(\mathcal{X})\geq {6+2 \choose 2}-(12+2\times 6) = 4 \]  

Recall that the space of all homogeneous cubic polynomials in $\mathbb{C}[X,Y,Z]$ has dimension 10, therefore for any given 9 points in $\mathbb{P}^{2}$, there exists a (possibly reducible) cubic curve passing through these points. 
Through the paper, by abuse of notation, for an algebraic curve $C\subset \mathbb P^2$, the homogenous  defining polynomial will also be denoted by $C$.  
\begin{lem} \label{1}
Suppose that there exist irreducible cubics $C_{1}$ and $C_{2}$ such that
\begin{itemize} 
\item  $C_{1}$ contains $x_{1}, x_{2}, x_{3}, x_{4}, x_{5}, x_{6}, x_{7}, x_{8}, x_{9}$ 
\item $C_{2}$ contains $x_{1}, x_{2}, x_{3}, x_{4}, x_{5}, x_{6},  x_{10}, x_{11}, x_{12}$
\end{itemize} 
such that  $C_{1}$ does not contain $x_{10}, x_{11}, x_{12}$ and $C_{2}$ does not contain $x_{7}, x_{8}, x_{9}$. Then, $\mathcal{X}$ contains two polynomials $P, Q$ such that the common zero set of $P$ and $Q$ is discrete and contains $S$.  
\end{lem}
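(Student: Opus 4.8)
The plan is to take $P=C_1C_2$ as the first polynomial and to find the second polynomial $Q$ inside $\mathcal{X}$ so that it avoids the two components of $P$. First I would check that $P=C_1C_2\in\mathcal{X}$: the product vanishes at every point of $S$ because each $x_j$ lies on $C_1$ or on $C_2$, and for $1\le i\le 6$ \emph{both} factors vanish at $x_i$, so by the product rule every first-order partial derivative of $C_1C_2$ vanishes at $x_i$ as well; thus $P$ has the required double points at $x_1,\dots,x_6$ and $P\in\mathcal{X}$. Since $\{P=0\}=C_1\cup C_2$ with $C_1,C_2$ irreducible, the common zero set of $P$ and any other $Q$ is discrete precisely when $Q$ is divisible by neither $C_1$ nor $C_2$; in that case $P$ and $Q$ share no curve component, so $\{P=Q=0\}$ is finite, and it contains $S$ because every element of $\mathcal{X}$ vanishes on $S$. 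So it remains to produce $Q\in\mathcal{X}$ with $C_1\nmid Q$ and $C_2\nmid Q$.

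Next I would set $\mathcal{X}_i=\{Q\in\mathcal{X}: C_i\mid Q\}$ for $i=1,2$. These are linear subspaces of $\mathcal{X}$, and since $\mathbb{C}$ is infinite a vector space is never the union of two proper subspaces; hence it suffices to prove $\mathcal{X}_1\subsetneq\mathcal{X}$ and $\mathcal{X}_2\subsetneq\mathcal{X}$, after which any $Q\in\mathcal{X}\setminus(\mathcal{X}_1\cup\mathcal{X}_2)$ does the job. By the symmetry of the hypotheses it is enough to treat $\mathcal{X}_1$. Writing $Q=C_1R$ with $R$ a cubic, membership $Q\in\mathcal{X}$ forces $R(x_{10})=R(x_{11})=R(x_{12})=0$ (because $C_1$ does not vanish there), and, computing $\nabla(C_1R)(x_i)=R(x_i)\nabla C_1(x_i)$ at the double points, it forces $R(x_i)=0$ at every $x_i$, $1\le i\le 6$, that is a smooth point of $C_1$. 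An irreducible cubic has at most one singular point, so at least five of $x_1,\dots,x_6$ are smooth on $C_1$; fixing five of them together with $x_{10},x_{11},x_{12}$ gives a set $Z$ of eight points, all lying on $C_2$, at which $R$ must vanish.

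The heart of the argument is then the estimate $\dim\mathcal{X}_1=\dim\{R:R|_Z=0\}\le 2$. Because the eight points of $Z$ sit on the irreducible cubic $C_2$, Bezout already forbids four of them from being collinear and forbids seven of them from lying on a conic. I would convert this into the bound by restricting to $C_2$: the restriction map from plane cubics to $H^0(C_2,\mathcal{O}_{C_2}(3))$ is surjective with one-dimensional kernel $\langle C_2\rangle$, and on the arithmetic-genus-one curve $C_2$ the degree-eight divisor $Z$ imposes independent conditions on $\mathcal{O}_{C_2}(3)$, since by Riemann--Roch $h^1(\mathcal{O}_{C_2}(3)(-Z))=0$ (its Serre dual has degree $-1<0$). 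This gives $\dim\mathcal{X}_1\le 2$, and as $\dim\mathcal{X}\ge 4$ from the dimension count preceding the lemma, we conclude $\mathcal{X}_1\subsetneq\mathcal{X}$, and symmetrically $\mathcal{X}_2\subsetneq\mathcal{X}$.

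I expect the main obstacle to be exactly this last dimension estimate: making the independence claim fully rigorous, in particular handling the case where $C_2$ is a nodal or cuspidal (still irreducible) cubic and one of the forced points is its singular point, where the clean line-bundle form of Riemann--Roch must be replaced by a computation with the dualizing sheaf $\omega_{C_2}\cong\mathcal{O}_{C_2}$. Alternatively, the independence can be argued directly from the Bezout restrictions ``no four of $Z$ collinear, no seven on a conic'' together with the classical fact that eight points fail to impose independent conditions on cubics only when five of them are collinear or all eight lie on a conic; this elementary route sidesteps the singular-point subtlety, since those two excluded configurations are ruled out on an irreducible cubic regardless of where the points sit.
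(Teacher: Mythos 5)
Your proof is correct, and it takes a genuinely different route from the paper's. The paper also starts from $P_{1}=C_{1}C_{2}$, but then chooses $P_{2},P_{3},P_{4}$ so that $\{P_{1},P_{2},P_{3},P_{4}\}$ is linearly independent and runs a case analysis on which of $C_{1},C_{2}$ divides which $P_{i}$; in the one hard case (say $C_{1}$ divides all of $P_{2},P_{3},P_{4}$) it writes $P_{i}=C_{1}D_{i}$, observes exactly as you do that the cubics $D_{i}$ pass through eight fixed points of the irreducible cubic $C_{2}$, and then derives a contradiction with linear independence via the Cayley--Bacharach theorem and Noether's AF+BG theorem, including a separate sub-case where $C_{2}$ and $D_{2}$ meet with multiplicity $2$ at one of the eight points, which requires a local-ring computation. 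You replace that entire case analysis by the single dimension bound $\dim\mathcal{X}_{i}\leq 2$, obtained from the classical fact that eight points impose independent conditions on cubics unless five are collinear or all eight lie on a conic (both impossible for points on an irreducible cubic, by Bezout), combined with the observation that a vector space is never the union of two proper subspaces. Your packaging is shorter and cleaner: it subsumes the paper's hard case outright (since $P_{1}\in\mathcal{X}_{1}$, that case would place four independent elements in a space of dimension at most $2$), it avoids the intersection-multiplicity sub-case entirely, and it shows the good $Q$ is generic in $\mathcal{X}$ rather than produced by hand. The cost is the reliance on the independent-conditions lemma; your cohomological variant of it does have the singular-point issue you flag ($\mathcal{O}_{C_{2}}(-Z)$ need not be invertible if $Z$ contains the singular point of $C_{2}$), but you correctly note the elementary route circumvents this, so the proof stands. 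Two cosmetic points: the displayed equality $\dim\mathcal{X}_{1}=\dim\{R:R|_{Z}=0\}$ should be an inequality $\leq$ (membership in $\mathcal{X}_{1}$ may impose more conditions on $R$ than vanishing on $Z$), which is all you use; and the two-proper-subspace fact holds over any field, so the appeal to infiniteness of $\mathbb{C}$ is unnecessary. At bottom both proofs draw on the same Cayley--Bacharach circle of ideas, so the difference is one of organization rather than substance---but yours is a real simplification.
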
 

\begin{proof}
Let $P_{1}=C_{1}C_{2}$. Then, it is clear that $P_{1}$ belongs to $\mathcal{X}$. Since $dim(\mathcal{X})\geq 4$, we can choose $P_{2}, P_{3}, P_ {4}\in \mathcal{X}$ such that the set $\{P_{1},P_{2},P_{3},P_{4}\}$ is linearly independent. If for some $i\in \{2,3,4\}$, neither  $C_{1}$ nor $C_{2}$ divide $P_{i}$, then we can take $P=P_{1}$ and $Q=P_{i}$: In virtue of the irreducibility of $C_{1}$ and $C_{2}$, it is clear that in this case $P$ and $Q$ will have no common components, so their common zero locus will be discrete; it will contain $S$ as well by construction. Also, if both $C_{1}$ and $C_{2}$ divide $P_{i}$ for some $i\in \{2,3,4\}$ then we obtain the contradiction $P_{1}=P_{i}$, so from now on we may assume that this does not happen. Suppose now that $C_{1}$ divides $P_{i}$ and $C_{2}$ divides $P_{j}$ where $i\neq j$ and $i,j\in \{2,3,4\}$. Then $P_{i}+P_{j}\in \mathcal{X}$ and neither $C_{1}$ nor $C_{2}$ divides $P_{i}+P_{j}$. Therefore, we can take $P=P_{1}, Q=P_{i}+P_{j}$ in this case, and we are done. 

The remaining case is when one of $C_{1}$ or $C_{2}$ divides $P_{2}, P_{3}, P_{4}$, and the other doesn't divide any of them. Suppose without loss of generality that $C_{1}$ divides $P_{2}, P_{3}$ and $P_{4}$. We can write $P_{2}=C_{1}D_{2}$, $P_{3}=C_{1}D_{3}$ and $P_{4}=C_{1}D_{4}$ where $D_{2}, D_{3}, D_{4}$ are (possibly reducible) cubics. Since $C_{1}$ does not contain $x_{10}, x_{11}, x_{12}$, each $D_{i}$ must contain these three points. For $j\in \{1,2,3,4,5,6\}$, if $x_{j}$ is a smooth point of $C_{1}$, then in order for $P_{2}, P_{3}$ and $P_{4}$ to have order $2$ at $x_{j}$, the cubics $D_{2}, D_{3}$ and $D_{4}$ must contain $x_{j}$ (this cannot be inferred if $x_{j}$ is a singular point of $C_{1}$). Since $C_{1}$ is an irreducible cubic by assumption, it can have at most one singular point. Therefore, without loss of generality we may assume that $x_{2},x_{3},x_{4},x_{5},x_{6}$ are smooth points of $C_{1}$, hence they belong to $D_{2}, D_{3}$ and $D_{4}$. 

We now have $3$ cubics $D_{2},D_{3},D_{4}$, each of which  intersects the irreducible cubic $C_{2}$ at a set of points containing the $8$ distinct points $x_{2},x_{3},x_{4},x_{5},x_{6},x_{10},x_{11},x_{12}$. By Bezout's theorem, the total intersection number of $D_{2}$ and $C_{2}$ is $9$. If there is a $9$th intersection point $x^{*}$ of $D_{2}$ and $C_{2}$ distinct from the previous $8$ points, then by  Cayley-Bacharach theorem, $D_{3}$ must also contain $x^{*}$. But then, by Noether's AF+BG theorem, $D_{3}$ must lie on the pencil through $C_{2}$ and $D_{2}$, namely there exists $[\alpha:\beta]\in \mathbb{CP}^{1}$ such that $D_{3}=\alpha C_{2}+\beta D_{2}$. This would imply that $P_{3}=\alpha P_{1}+\beta P_{2}$, contradicting the linear independence of $\{P_{1},P_{2},P_{3},P_{4}\}$. The only other possibility is that $C_{2}$ and $D_{2}$ have intersection number $2$ at one of the $8$ intersection points, without loss of generality at $x_{2}$. Then we can assume that the intersection numbers $\mu_{x_{2}}(C_{2},D_{3})$ and $\mu_{x_{2}}(C_{2},D_{4})$ are also  equal to $2$; else a generic member of the pencil through $D_{2}$ and one of $D_{3}$ or $D_{4}$ will intersect $C_{2}$ in $9$ distinct points, and we can repeat the argument in the beginning of this paragraph by using this generic member instead of $D_{2}$. 

To resolve this final case, note that $\mu_{x_{2}}(C_{2},D_{2})=2$ is the dimension of the $\mathbb{C}$-vector space $\mathcal{O}_{\mathbb{P}^2, x_{2}}/(C_{2},D_{2})$ where   $\mathcal{O}_{\mathbb{P}^2, x_{2}}$ denotes the local ring of the projective plane at $x_{2}$. Denoting by $m_{\mathbb{P}^{2},x_{2}}$ its maximal ideal, we observe that $m_{\mathbb{P}^{2},x_{2}}/(C_{2},D_{2})$ is $1$-dimensional. But now, both $D_{3}$ and $D_{4}$ contain $x_{2}$, hence their representatives in this local ring both belong to $m_{\mathbb{P}^{2},x_{2}}$. Hence, their equivalence classes in  $m_{\mathbb{P}^{2},x_{2}}/(C_{2},D_{2})$ will be linearly dependent. This implies that there exists $[\alpha:\beta]\in \mathbb{CP}^{1}$ such that the equivalence class of $\alpha D_{3}+\beta D_{4}$  is zero in all $\mathcal{O}_{\mathbb{P}^2, p}/(C_{2},D_{2})$, where $p$ ranges over all intersection points of $C_{2}$ and $D_{2}$. By Noether's AF+BG theorem again, we deduce that $\alpha D_{3}+\beta D_{4}$ lies on the pencil through $C_{2}$ and $D_{2}$. This contradicts the linear independence of $\{P_{1},P_{2},P_{3},P_{4}\}$ in this final case as well, finishing the proof. 
\end{proof} 

By $m_j(S)$, we denote the maximum number of points of $S$ which are contained in an algebraic curve of degree $j$. For a finite generic subset  $S \subset \mathbb C^2$  which contains at least $9$ points, $m_1(S)=2$, $m_2(S)=5$ and $m_3(S)=9$.

\begin{lem}\label{2}Let  $S\subset \mathbb C^2$ such that $|S|=12$ and $m_3(S)=9$. Then there exists a $u\in PSH(\mathbb C^2)$ of logarithmic growth which is locally bounded outside a finite set and satisfies one of the following:
\begin{itemize}

\item  $\gamma_u=6$ and $u$ has logarithmic poles at the points of $S$ of total weight $18$.  
\item $\gamma_u=5$ and $u$ has logarithmic poles at the points of $S$ of total weight $15$.
\item $\gamma_u=4$ and $u$ has logarithmic poles  at the points of $S$ of total weight $12$ .
\item $\gamma_u=3$ and $u$ has logarithmic poles  at the points of $S$ of total weight $9$ .
  
\end{itemize}

\end{lem}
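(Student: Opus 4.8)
The plan is to realize $u$ in the form $u=\tfrac12\log(|P|^2+|Q|^2)$ for two polynomials $P,Q$ of a common degree $d\in\{3,4,5,6\}$ with no common factor; then $\gamma_u=d$, and the weights of the logarithmic poles are read off from the local vanishing of $P,Q$ on $S$. The mechanism is that of Theorem \ref{cn}: if $P$ and $Q$ both vanish to order exactly $r$ at a common zero $p$ with intersection number $r^2$ there---equivalently, their degree-$r$ leading forms are coprime---then $\tfrac12\log(|P|^2+|Q|^2)$ has a clean logarithmic pole of weight $r$ at $p$. Replacing the uniform normalization $\tfrac{1}{2r}$ of Theorem \ref{cn} by $\tfrac12$ lets poles of several weights coexist. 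Since two degree-$d$ curves meet in $d^2$ points with multiplicity, a system of poles with weights $w_j$ must satisfy $\sum_j w_j^2\le d^2$; each of the four alternatives realizes $\sum_j w_j=3d$, so the goal is to exhibit, for every admissible configuration of $S$, curves $P,Q$ of a feasible degree whose poles on $S$ total $3d$.

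The central case is $d=6$, which is exactly what the preceding setup prepares. When $S$ can be relabelled so that the hypotheses of Lemma \ref{1} hold, that lemma produces $P,Q\in\mathcal X$ whose common zero set is discrete and contains $S$. By construction each of $P,Q$ vanishes to order $\ge2$ at $x_1,\dots,x_6$ and to order $\ge1$ at $x_7,\dots,x_{12}$, so the upper bounds $u\le2\log\|z-x_j\|+\mathcal O(1)$ and $u\le\log\|z-x_j\|+\mathcal O(1)$ hold at once; these are in fact all that Proposition \ref{prop1} will consume downstream. Choosing $P,Q$ generically within $\mathcal X$ makes the leading forms coprime at every $x_j$, upgrading these to clean poles of weights $2$ (six times) and $1$ (six times), for total weight $18$, with $\gamma_u=6$. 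Here $6\cdot2^2+6\cdot1^2=30\le 36=6^2$, so Bezout leaves room for the few extra intersection points, which lie off $S$ and contribute only harmless poles there.

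The other three alternatives are forced precisely when the relabelling required by Lemma \ref{1} is obstructed, that is, when the cubics through nine points of $S$ are compelled to be reducible because too many points of $S$ are collinear or lie on a conic. I would stratify these degenerate configurations by $m_1(S)$ and $m_2(S)$. When a line or a conic carries so many points of $S$ that it becomes a forced component of every curve of the degree in play, the degree-$6$ construction breaks down, but the very same surplus of special points supplies curves $P,Q$ of lower degree with the prescribed poles: in the most degenerate situation, when nine points of $S$ lie on a pencil of cubics, one takes $P,Q$ to be two members of the pencil and obtains nine simple poles with $\gamma_u=3$ and total weight $9$ (a complete intersection, $\sum_j w_j^2=9=3^2$); the intermediate configurations are handled by curves of degree $4$ and $5$ (twelve simple poles of total weight $12$, and, for instance, three double and nine simple poles of total weight $15$, respectively).

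The main obstacle is this degenerate case analysis. One must verify that the failure of Lemma \ref{1} genuinely forces the special position that makes a lower-degree construction possible, and then, in each such configuration, produce two curves of the correct degree with \emph{discrete} common zero set whose vanishing orders at the points of $S$ are exactly the asserted weights. As in Lemma \ref{1}, this rests on Bezout's theorem together with the Cayley--Bacharach and Noether $AF+BG$ relations to exclude forced common components and to pin down the intersection multiplicities, so that every pole is clean of its stated weight and the totals $3d$ are attained.
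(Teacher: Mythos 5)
Your strategy coincides with the paper's: build $u=\tfrac12\log(|P|^2+|Q|^2)$ from two coprime curves of a common degree $d\in\{3,4,5,6\}$, handle the main case via Lemma \ref{1}, and stratify the remaining configurations by $m_1(S)$ and $m_2(S)$. But there is a genuine gap: everything beyond the degree-$6$ case is announced rather than proved, and that deferred part \emph{is} the lemma. The paper's proof is a complete case analysis on $m_2(S)\in\{5,6,7\}$ (possible since $m_3(S)=9$ forces $5\le m_2(S)\le 7$), and in each case the second curve is produced by an explicit descent: one takes $P_1=C_1C_2$ (or a product of two conics when $m_2(S)=7$) and a linearly independent $P_2$ in the relevant linear system obtained by a dimension count; if $P_1$ and $P_2$ share a component, that component (a line, a conic, or a cubic factor) is divided out of both, which is precisely what generates the degree-$5$, degree-$4$ and degree-$3$ alternatives; and when coprimality fails at the bottom of the descent, Bezout forces the two residual curves to coincide, contradicting the linear independence of $P_1,P_2$. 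None of this is in your proposal. Concretely, you do not verify (i) that $m_2(S)=5$ forces the two cubics through the chosen $9$-point subsets to be irreducible and to miss the remaining points of $S$, which is what makes Lemma \ref{1} applicable at all; (ii) that in the reducible cases a second curve of the right lower degree with \emph{discrete} common zero set exists --- your suggestion to take two members of a pencil of cubics through nine points presupposes such a pencil exists, but nine points in general position impose independent conditions on cubics (so there is a unique cubic and no pencil); in the paper the pair of distinct cubics through nine common points falls out of the descent, and their coprimality still has to be argued; (iii) which configurations call for degree $4$ (in the paper: $m_2(S)=7$ with an irreducible conic through $7$ points and a second irreducible conic through the other $5$, coprimality again by Bezout). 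The failure mode is not hypothetical: the case $m_2(S)=6$, $m_1(S)=4$, and the reducible-conic subcase of $m_2(S)=7$ require rearranging which points the cubics pass through (and, in the paper, an analysis of the possible configurations of lines carrying four points of $S$), and your sketch gives no mechanism for this.

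A secondary, repairable issue: the claim that choosing $P,Q$ ``generically within $\mathcal{X}$'' makes their leading forms coprime at every $x_j$, upgrading the bounds to clean poles, is unsupported --- in Lemma \ref{1} one of the two polynomials is forced to be $C_1C_2$, and you cannot perturb the pair inside $\mathcal{X}$ while guaranteeing the common zero set stays discrete. As you yourself observe, Proposition \ref{prop1} only consumes the one-sided estimates $u(z)\le w_j\log\|z-x_j\|+\mathcal{O}(1)$, which follow from the vanishing orders alone, so this genericity claim should simply be dropped; the missing case analysis of the previous paragraph, however, cannot be.
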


\begin{proof}We will consider the different cases for the configurations of points of $S$. Since $m_3(S)=9$, we have $5 \leq m_2(S)\leq 7$. Also note that $m_{1}(S)\leq m_{2}(S)-2$. \\
\textbf{Case 1.}  $ m_2(S)=5$.  Consider the cubics $C_1$ and $C_2$ passing through  $$x_{1}, x_{2}, x_{3}, x_{4}, x_{5}, x_{6}, x_{7}, x_{8}, x_{9}$$ and $$x_{1}, x_{2}, x_{3}, x_{4}, x_{5}, x_{6}, x_{10}, x_{11}, x_{12}$$ respectively. Since $m_2(S)=5$, we have $m_1(S)\leq 3$. This implies that $C_1$ and $C_2$ are irreducible cubics. Since $m_3(S)=9$, $C_1$ and $C_2$ does not contain any other point of $S$. Then we choose $P,Q$ as in Lemma \ref{1}.  By Theorem \ref{cn}, $u=\frac{1}{2}\log(|P|^2+|Q|^2)$ is a plurisubharmonic function which has logarithmic poles  at the points of $S$ of total weight $18$, locally bounded outside a finite set and  $\gamma_u=6$. 

\textbf{Case 2.}  $ m_2(S)=6, m_1(S)\leq 3$. Consider the cubics $C_1$ and $C_2$ passing through  $$x_{1}, x_{2}, x_{3}, x_{4}, x_{5}, x_{6}, x_{7}, x_{8}, x_{9}$$ and $$x_{1}, x_{2}, x_{3}, x_{4}, x_{5}, x_{6}, x_{10}, x_{11}, x_{12}$$ respectively. If $m_1(S)=2$ then $C_1$ and $C_2$ are irreducible and we return Case 1.  Otherwise, we may assume that $m_1(S)=3$ and $C_1$ is reducible. Then $C_1=l_1\cup C$ where $l_1$ is a line through $3$ points of $\{x_1,\dots ,x_9\}$ and $C$ is a conic (possibly reducible) through the remaining $6$ points of $\{x_1,\dots ,x_9\}$.  Let $\mathcal{X}$ be the subspace of $\mathbb{C}[X,Y,Z]$ containing all homogeneous polynomials $P(X,Y,Z)$ of degree $6$ such that $P$ vanishes on all points of $S$, and  vanishes to second order at the points $x_{1},\ldots,x_{6}$. Then we take $P_1=C_1C_2\in \mathcal{X}$ and by counting the dimension of $\mathcal{X}$, we find another $P_2\in\mathcal{X}$ such that $P_1$ and $P_2$ are linearly independent.  If  $P_1$ and $P_2$ have no common factor then  $u=\frac{1}{2}\log(|P_1|^2+|P_2|^2)$ is a plurisubharmonic function which has logarithmic poles  at the points of $S$ of total weight $18$, locally bounded outside a finite set and  $\gamma_u=6$. 

Otherwise, we first assume that $P_2$ is divisible by $l_1$. Then $P_2=l_1\gamma$ where $\gamma$ is a curve of degree $5$ which passes through the $15$ points (counting multiplicity) of  $S\setminus l_1$ . If $\gamma$ and $CC_2$ have no common factor then  $u=\frac{1}{2}\log(|\gamma|^2+|CC_2|^2)$ is  a plurisubharmonic function which has logarithmic poles  at the points of $S$ of total weight $15$, locally bounded outside a finite set and  $\gamma_u=5$.  Otherwise, $\gamma$ is divisible by $C$ or $C_2$  or an irreducible factor of $C$ or $C_2$.  If $C$ divides $\gamma$ then $\gamma=C\gamma'$ where $\gamma'$  is a cubic which shares the same $9$ points of $S$ with $C_2$.   If $\gamma'$ and $C_2$ have no common factor then  $u=\frac{1}{2}\log(|\gamma'|^2+|C_2|^2)$ is  a plurisubharmonic function which has logarithmic poles  at the points of $S$ of total weight $9$, locally bounded outside a finite set and  $\gamma_u=3$.  Otherwise, $\gamma^{\prime}$ and $C_{2}$ either have a degree $1$ or degree $2$ common factor. If they have a degree $2$ common factor, then it must contain $6$ points of $S$, so the remaining lines share the same $3$ points, so they coincide, showing that $\gamma^{\prime}=C_{2}$. If they have a degree $1$ common factor, then it must contain $3$ points of $S$, so the remaining conics share the same $6$ points; these conics then coincide by applying Bezout's theorem (regardless of whether the conic is irreducible or reducible). In all cases, $P_{1}$ and $P_{2}$ become linearly dependent, causing a contradiction. A similar argument works if  a component of $C$ or $C_2$ divides $\gamma$.


In the case that $P_2$ is divisible by $C$ or a  component of $C$ or $C_2$ or a component of $C_2$,  the same discussion follows as above.



\textbf{Case 3.} $ m_2(S)=6$ and $m_1(S)=4$. Let $l$ be the unique line containing $4$ points for $S$, say $x_9,x_{10},x_{11},x_{12}.$ We consider the cubics $C_1$ and $C_2$ passing through  $$x_{1}, x_{2}, x_{3}, x_{4}, x_{5}, x_{6}, x_{7}, x_{9}, x_{10},$$ and $$ x_{1}, x_{2}, x_{3}, x_{4}, x_{5},x_{6}, x_{8}, x_{11}, x_{12}$$ respectively. If both $C_1$ and $C_2$ are irreducible then we return Case 1. Otherwise, we may assume that $C_1$ is reducible, say $C_1=l_1\cup C$ where $l_1$ is a line and $C$ is a conic. Since $m_2(S)=6$, the line $l_{1}$ must contain at least $3$ points out of the $9$ points of $S$ that $C_{1}$ contains, consequently $l_1\neq l$ since otherwise $l_{1}$ would also have to contain $x_{11}$ and $x_{12}$, contradicting $m_{1}(S)=4$. Since $l$ is the unique line containing $4$ points, $l_1$ contains exactly $3$ points of $x_{1}, x_{2}, x_{3}, x_{4}, x_{5}, x_{6}, x_{7}, x_{9}, x_{10}$  and $C$ contains the  remaining $6$ points. Then the result follows from the same argument as in Case 2.   

\textbf{Case 4.} $m_2(S)=7.$ By rearranging the points of $S$, we may assume that there is a conic $C$ through $x_1,\dots ,x_7.$ 

First we assume that $C$ is irreducible. Let $\tilde C$ be the conic passing through the points $$x_8,x_9,x_{10},x_{11}, x_{12}.$$  Since $m_3(S)=9$, $\tilde C$ is irreducible, otherwise the line component of $\tilde{C}$ containing at least $3$ points together with $C$ would give us a cubic containing at least $10$ points of $S$. Let $\mathcal{X}$ be the vector space of homogeneous polynomials of degree $4$, passing through the points of $S$. Then $P_1=C\tilde C\in \mathcal{X}$. The dimension of $\mathcal{X}$ is at least $15-12=3$, therefore we can choose $P_2\in \mathcal{X}$ such that $P_1$ and $P_2$ are linearly independent. It follows from Bezout's theorem that $P_1$ and $P_2$ have no common factor. Thus $u=\frac{1}{2}\log(|P_1|^2+|P_2|^2)$ is  a plurisubharmonic function which has logarithmic poles  at the points of $S$ of total weight $12$, locally bounded outside a finite set and  $\gamma_u=4$.

Now we consider the case where $C$ is reducible. We note that there cannot be any line containing $5$ or more points of $S$, since otherwise this line together with a conic through $5$ other poıints of $S$ would contradict $m_{3}(S)=9$. Therefore $C$ must split into two lines at least one of which has $4$ points. By rearranging the points, we may assume that $l$ is the line containing $4$ points, $x_1,x_2,x_3,x_4$ of $C$. Let us list all possible lines which contain $4$ points of $S$. Any such two lines must intersect at a point of $S$ since $m_2(S)=7$.   At most two such lines intersect at any point of $S$ since $m_3(S)=9$. Considering these restrictions, by rearranging the points of $S$, we may assume that there are at most $5$ lines, each containing $4$ points of $S$, (see Figure~\ref{fig1}). The proof below will be presented as if all $5$ lines are actually present, but the argument works without any change if there are less than $5$ such lines. 

\begin{figure}
\begin{center} 
\scalebox{0.56}
{\includegraphics{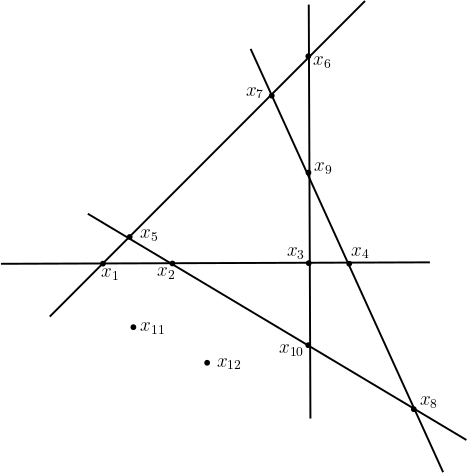}}
\caption{}\label{fig1} 
\end{center}
\end{figure}

Let $C_1$ and $C_2$ be the cubics through the points $$x_4,x_5,x_6,x_8,x_{11},x_{12}, x_{3},x_{7},x_{10}$$ and $$x_4,x_5,x_6,x_8,x_{11},x_{12}, x_{1},x_{2},x_{9},$$ respectively. Then $C_1$ and $C_2$ do not contain any $4$ collinear points of $S$. If $C_1$ and $C_2$ are irreducible then the result follows from Lemma \ref{1}. Otherwise, we may assume that $C_1$ is reducible. Let $C_1=C\cup l$ where $C$ is a conic which contains $6$ or $7$ points of $S$ and $l$ is a line. If $C$ contains $7$ points of $S$, then $C$ must be an irreducible conic and the result follows from the above argument. Otherwise, $C$ contains $6$ points of $S$ and we conclude the result by a similar argument to the one in Case 2. 
\end{proof}

 Before we prove our main result, we will eliminate some degenerate configurations of points in $V_{1/3}(T)$. 

\begin{pro}\label{prorem} Say $T$ is a positive closed $(1,1)$ current with unit mass  such that the set $V_{1/3}(T)$ is finite and  $V_{1/3}(T)\setminus C$ contains at least $2$ points for every cubic $C$.  Then, \\
(a)  $m_{1}(V_{1/3}(T))\leq 4$, \\
(b) $m_{2}(V_{1/3}(T))\leq 7$, \\ 
(c) An irreducible cubic can contain at most $9$ points of $V_{1/3}(T)$. 
\end{pro}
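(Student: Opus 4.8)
The plan is to argue each part by contradiction, measuring in every case the ``budget'' supplied by Proposition \ref{prop1}: if we can produce a $u\in PSH(\mathbb C^2)$ of logarithmic growth, locally bounded off a finite set, with poles of weights $\alpha_j$ at points $x_j\in V_{1/3}(T)$, then $\sum_j\alpha_j\nu(T,x_j)\le\gamma_u$, whereas $\nu(T,x_j)>1/3$ forces $\tfrac13\sum_j\alpha_j<\sum_j\alpha_j\nu(T,x_j)$. Hence any configuration admitting such a $u$ with $\sum_j\alpha_j>3\gamma_u$ is impossible. There are two mechanisms for producing $u$: (i) restriction of $T$ to an irreducible curve through the Siu decomposition together with the comparison estimate for Lelong numbers along that curve, and (ii) the direct construction $u=\frac{1}{2}\log(|P|^2+|Q|^2)$ of Theorem \ref{cn} and Lemma \ref{2}.

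For (c) I would use mechanism (i), and only finiteness of $V_{1/3}(T)$ is needed. Write the Siu decomposition $T=\lambda[C]+T'$, where $C$ is the irreducible cubic and $C$ is not a component of $T'$, so that $\nu(T,x)=\lambda\,\operatorname{mult}_x(C)+\nu(T',x)$. Since $\nu([C],x)\ge 1$ at every point of $C$, if $\lambda>1/3$ then every smooth point of $C$ lies in $V_{1/3}(T)$, contradicting finiteness; hence $\lambda\le 1/3$ and $\|T'\|=1-3\lambda$. The comparison estimate gives $\sum_{x\in C}\nu(T',x)\operatorname{mult}_x(C)\le\deg(C)\,\|T'\|=3(1-3\lambda)$. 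If $C$ contained $10$ points of $V_{1/3}(T)$, at least $9$ of them would be smooth points of $C$, an irreducible cubic having at most one singular point; summing $\nu(T,x_j)=\lambda+\nu(T',x_j)$ over these $9$ smooth points yields $\sum\nu(T,x_j)\le 9\lambda+3(1-3\lambda)=3$, while each term exceeding $1/3$ forces the sum to exceed $3$. This contradiction proves (c) (and in fact shows at most $8$ smooth points can occur).

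For (a) and (b) the same restriction applied to the curve itself is inconclusive, and this is where the main difficulty lies. Applying mechanism (i) to a line $L$ (resp. a conic) that is \emph{not} a component of $T$ gives $\sum_{x\in L}\nu(T,x)\le\|T\|=1$ (resp. $\le 2$), hence at most $2$ points (resp. at most $5$ points) of $V_{1/3}(T)$ on it; so the only way to exceed the bounds in (a) and (b) is for the line or conic to be a component of $T$, necessarily with multiplicity $\lambda\le 1/3$ by the finiteness argument above. Near the threshold $\lambda=1/3$ the restriction count degenerates, since points of $V_{1/3}(T)$ on the curve may carry arbitrarily small values $\nu(T',x)>0$, and so restriction alone cannot bound their number.

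To handle this component case I would invoke mechanism (ii): assuming a line carries $\ge 5$ points, or a conic carries $\ge 8$ points, of $V_{1/3}(T)$, I would adjoin further points of $V_{1/3}(T)$ lying off the curve---guaranteed to exist, and in sufficient supply, by the standing hypothesis that $V_{1/3}(T)\setminus C'$ has at least two points for every cubic $C'$---and build homogeneous $P,Q$ through the enlarged configuration, with prescribed vanishing orders at the heavier points, exactly in the spirit of Lemmas \ref{1} and \ref{2}. Choosing $P,Q$ without a common component makes $u=\frac{1}{2}\log(|P|^2+|Q|^2)$ admissible in Proposition \ref{prop1}, and a degree count is arranged so that $\sum_j\alpha_j>3\gamma_u$. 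The main obstacle, precisely as in Lemmas \ref{1} and \ref{2}, is to guarantee that $P$ and $Q$ can be taken with no common component, so that their zero locus is discrete; this demands a Bezout and Cayley--Bacharach analysis of the reducible possibilities, together with careful bookkeeping of the singular point of a cubic and of the intersection multiplicities at the shared points.
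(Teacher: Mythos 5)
Your proof of part (c) is correct and uses the same mechanism as the paper: the Siu decomposition $T=\lambda[C]+T'$, the mass bound $\lambda\le 1/3$, and the Lelong-number comparison for $T'\wedge[C]$ (made rigorous, as in the paper, by Demailly's regularization of the residual current). Your bookkeeping is in fact slightly cleaner than the paper's, since summing only over the at least $9$ smooth points gives $\sum_j\nu(T,x_j)\le 9\lambda+3(1-3\lambda)=3$ while each term exceeds $1/3$, which sidesteps the multiplicity of the possible singular point. Your reduction of (a) and (b) to the case where the line (resp.\ conic) occurs in the Siu decomposition with coefficient $\lambda\le 1/3$ is also correct.

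The gap is in how you finish (a) and (b) in that component case: what you offer there is a plan, not a proof. No degrees or multiplicities for $P,Q$ are specified, no dimension count is carried out, and the coprimality of $P$ and $Q$ --- which you yourself flag as the main obstacle --- is precisely where the plan threatens to collapse. With $5$ points of $V_{1/3}(T)$ on a line $L$, every curve of degree at most $4$ passing through all five contains $L$ by Bezout, so in low degree $P$ and $Q$ automatically share the component $L$; moreover, whichever of $P,Q$ avoids $L$ meets $L$ with total multiplicity at most its degree, so the five collinear points can contribute at most $\gamma_u$ to the total weight $\sum_j\alpha_j$, and the entire remaining budget $2\gamma_u$ must be carried by off-line points, none of which your sketch controls. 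The analogous obstruction appears for $8$ points on a conic. So the hard part of (a) and (b) is left entirely open.

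The paper closes these cases by a different and complete argument that your proposal misses: the restriction to the heavy curve is not inconclusive, because it yields a \emph{lower} bound on the component mass. For a line carrying five points, $1-a>5(1/3-a-\epsilon)$ forces $a\ge 1/6$; for a conic carrying eight points one gets $a\ge 1/6$ (irreducible case) or $a+b\ge 1/3$ (two lines with $4+4$ points, five on a line being excluded by (a)). Consequently, every $x\in V_{1/3}(T)$ off the curve satisfies $\nu\bigl(R/\|R\|,x\bigr)>\frac{1/3}{1-a}\ge\frac{2}{5}$ in the line case, resp.\ $\nu\bigl(R/\|R\|,x\bigr)>\frac{1/3}{1-2a}\ge\frac{1}{2}$ in the conic case, so Theorem 1.1 of \cite{Com} applied to the normalized residual current places all but at most one of the off-curve points on a conic, resp.\ a line. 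The union of that curve with $L$ (resp.\ with the conic) is a cubic outside of which $V_{1/3}(T)$ has at most one point, contradicting the standing hypothesis. Note that this is where the hypothesis ``at least two points off every cubic'' does its real work --- it is genuinely indispensable, since $T=\frac13[L]+\frac{2}{15}\sum_{i=1}^{5}[M_i]$ with generic lines $M_i$ has $V_{1/3}(T)$ equal to the five collinear points $L\cap M_i$ --- whereas in your plan it only supplies interpolation points, which is not enough.
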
 

\begin{proof} 
(a) Assume that  $V_{1/3}(T)$ contains at least  $5$ collinear  points. Say $x_1,\dots, x_{5}$ are contained in $L$. Then by Siu's decomposition theorem, \cite{Siu}, $T=a[L]+R$ for some current $R$ of bidegree $(1,1)$ having $0$ Lelong number generically on $L$. Using \cite{Dem}, for any $\epsilon>0$,  there exists a current $R'$ with $||R'||=||R||=1-a$, $\nu(R',x)=\nu(R,x)-\epsilon$ at any $x\in \mathbb P^2$ and $R'$ is smooth generically on $L$. Thus the intersection $R'\wedge [L]$ is well-defined. It follows that $$1-a=\int R'\wedge [L]\geq \displaystyle \sum_{i=1}^{5}\nu(R'\wedge [L],x_i)>\sum_{i=1}^{5}\nu(R',x_i)\nu([L],x_i)>5\left(\frac{1}{3}-a-\epsilon\right). $$
The second inequality follows from \cite[Corollary 5.10]{Dem} and the last inequality follows from the fact that $\nu([L],x_i)\geq 1$. Since $\epsilon>0$ is arbitrary, $a\geq\frac{1}{6}$. For any $x\in V_{1/3}(T)\setminus L$, 
$$\nu\left(\frac{R}{1-a},x\right)>\frac{2}{5}.$$ By \cite{Com}, all points (except possibly one point) in $V_{1/3}(T)\setminus L$  are contained in a conic $C$. Then $|V_{1/3}(T)\setminus (C\cup L)|\leq 1$, contradicting the assumption. Hence, $m_1(V_{1/3}(T))\leq 4$. 

(b) Assume that $V_{1/3}(T)$ contains at least  $8$  points on an irreducible conic $C$. Say $x_1,\dots, x_{8}$ are contained in $C$. Then by Siu's decomposition theorem, \cite{Siu}, $T=a[C]+R$ for some current $R$ of bidegree $(1,1)$  having $0$ Lelong number generically on $C$. Using \cite{Dem}, for any $\epsilon>0$,  there exists a current $R'$ with $||R'||=||R||=1-2a$, $\nu(R',x)=\nu(R,x)-\epsilon$ at any $x\in \mathbb P^2$ and $R'$ is smooth generically on $C$. It follows that $$2(1-2a)=\int R'\wedge [C]\geq \displaystyle \sum_{i=1}^{8}\nu(R'\wedge [C],x_i)>\sum_{i=1}^{8}\nu(R',x_i)\nu([C],x_i)>8\left(\frac{1}{3}-a-\epsilon\right). $$
Since $\epsilon>0$ is arbitrary, $a\geq\frac{1}{6}$. For any $x\in V_{1/3}(T)\setminus C$, 
$$\nu\left(\frac{R}{1-2a},x\right)>\frac{1}{2}.$$ By \cite{Com}, all points (except possibly one point) in $V_{1/3}(T)\setminus C$  are contained in a line $L$. Hence $|V_{1/3}(T)\setminus (C\cup L)|\leq 1$ , contradicting the assumption. 

Similar argument as above works if $C$ is  a union of $2$ lines, each of which contains $4$ points of $S$.  Let $C=L_1\cup L_2$ where $L_1$ and $L_2$  are the lines containing $x_1,\dots, x_4 $ and $x_5,\dots x_8$, respectively.  Then  $T=a[L_1]+b[L_2]+R$ for some current $R$ of bidegree $(1,1)$  having $0$ Lelong number generically on $C$. Using \cite{Dem}, for any $\epsilon>0$,  there exists a current $R'$ with $||R'||=||R||=1-a-b$, $\nu(R',x)=\nu(R,x)-\epsilon$ at any $x\in \mathbb P^2$ and $R'$ is smooth generically on $C$. It follows that

 \begin{eqnarray*}2(1-a-b)&=&\int R'\wedge ([L_1]+[L_2])\geq \displaystyle \sum_{i=1}^{8}\nu(R'\wedge ([L_1]+[L_2]),x_i)\\ &>&\sum_{i=1}^{4}\nu(R',x_i)\nu([L_1],x_i)+\sum_{i=5}^{8}\nu(R',x_i)\nu([L_2],x_i)>4\left(\frac{1}{3}-a-\epsilon+\frac{1}{3}-b-\epsilon\right). \end{eqnarray*}

Since $\epsilon>0$ is arbitrary, $a+b\geq\frac{1}{3}$. For any $x\in V_{1/3}(T)\setminus C$, 
$$\nu\left(\frac{R}{1-a-b},x\right)>\frac{1}{2}.$$ By \cite{Com}, all points (except possibly one point) in $V_{1/3}(T)\setminus C$  are contained in a line $L$. Hence $|V_{1/3}(T)\setminus (C\cup L)|\leq 1$ , contradicting the assumption.  Hence $m_2(V_{1/3}(T))\leq 7$. 

(c)  Assume that $V_{1/3}(T)$ contains at least $10$ points, $x_1,\dots, x_{10}$ on an irreducible cubic $C$. Then by Siu's decomposition theorem, \cite{Siu},  $T=a[C]+R$ for some current $R$ of bidegree $(1,1)$ having $0$ Lelong number generically on $C$ and $a\leq \frac{1}{3}$. Using  \cite{Dem}, for any $\epsilon>0$,  there exists a current $R'$ with $||R'||=||R||=1-3a$, $\nu(R',x)=\nu(R,x)-\epsilon$ at any $x\in \mathbb P^2$ and $R'$ is smooth generically on $C$.  It follows that $$3(1-3a)=\int R'\wedge [C]\geq \displaystyle \sum_{i=1}^{10}\nu(R'\wedge [C],x_i)>\sum_{i=1}^{10}\nu(R',x_i)\nu([C],x_i)>10\left(\frac{1}{3}-a-\epsilon\right). $$ 
 Since $\epsilon>0$ is arbitrary, $a\geq \frac{1}{3}$ and hence $T=\frac{1}{3}[C].$ This implies that $V_{1/3}(T)$ may contain only the singular point of $C$ and hence we conclude that at most $9$ points of $V_{1/3}(T)$ may lie on an irreducible cubic. 
\end{proof}

\begin{proof}(of Theorem \ref{0} ) We first assume that $E_{\beta}(T)$ contains an algebraic curve $C$ for some $\beta>\alpha$. By Siu's decomposition theorem, there exists a positive closed $(1,1)$ current $R$ on $\mathbb P^2$ with $0$ Lelong number generically on $C$ such that $T=\beta[C]+R$. Then $ \deg(C)\leq \frac{1}{\beta}<3$. If $C$   is a conic then $||R||=1-2\beta<\frac{1}{3}$. This implies that $V_{\alpha}(T)=C$. If $C$ is a line, then $||R||=1-\beta<\frac{2}{3}.$ It follows that $V_{\alpha}(T)=C\cup V_{\alpha}(R)$. By Theorem 1.1 in \cite{Com}, there exist a line $L$ and a point $p$ such that  $$V_{\alpha}(R)\subset V_{\frac{1}{2}}\left(\frac{R}{||R||}\right)\subset L\cup \{p\} .$$ Then $|V_{\frac{1}{3}}(T)\setminus (L\cup C)|\leq 1$.

Now we assume that $E_{\beta}(T)$ is $0$ dimensional for all $\beta>\alpha$. Then $V_{\alpha}(T)$ is at most countable. If  $V_{\alpha}(T)$ is infinite then $E_{\frac{1}{3}}(T) $ contains an algebraic curve $C$ of degree at most $3$ and $T=\frac{1}{3}[C]+R$. If $C$ is a cubic then $R=0$. This implies that $\nu(T,p)>\frac{1}{3}$ only at the singular points of $C$ and hence $V_{\frac{1}{3}}(T)$ is finite. Hence $C$ must be a conic or line. If $C$ is a conic then  $||R||=\frac{1}{3}$ and $V_{\alpha}(T)\subset C$. If $C$ is a line then $||R||=\frac{2}{3}$. Since $V_{\alpha}(R)\subset V_{\frac{1}{2}}\left( \frac{R}{||R||} \right)$,  Theorem 1.1 in \cite{Com}  implies that there exists a line $L$ such that $|V_{\alpha}(R)\setminus L|\leq 1$. Thus $|V_{\alpha}(T)\setminus (L\cup C)|\leq 1$. 

Finally we assume that $V_{\alpha}(T)$ is finite and $|V_{\alpha}(T)\setminus C|>2$ for any cubic $C$. Then $V_{\alpha}(T)$ has a subset $S$ with $|S|=12$. 

By Proposition \ref{prorem}, $m_1(S)\leq 4$ and $m_2(S)\leq 7$. We also note that $m_3(S)\leq 11:$ If a cubic contains all the points of $S$, then it must be reducible by  Proposition \ref{prorem}. But this contradicts to the fact that $m_1(S)\leq 4$ and $m_2(S)\leq 7$. 

\textbf{Case 1.} $m_3(S)=9$. By applying Proposition \ref{prop1}  with  one of the plurisubharmonic functions $u$ in Lemma \ref{2}, we obtain that $\alpha <\frac{1}{3}$ which contradicts to the choice of $\alpha$. 

Now we move to the cases where $m_3(S)=10$. Let $\Gamma$ be a cubic containing $10$ points of $S$. By above discussion, $\Gamma$ must be reducible,  $m_2(S)\leq 7$ and $m_1(S)\leq 4$. Thus $m_2(S)=6$ or $7$. First we assume that $m_2(S)=6.$ \\

\textbf{Case 2.} $m_3(S)=10$, $m_2(S)=6.$ Then $m_{1}(S)=4$ and there is a unique line containing $4$ points of $S$. Let $L$ be the line containing the points $x_{1},x_{2},x_{3},x_{4}$, $C_1$ and $C_{2}$ be the cubics through $$x_{1},x_{2},x_{5},x_{7},x_{8},x_{9},x_{10},x_{11},x_{12}$$ and $$x_{3},x_{4},x_{6},x_{7},x_{8},x_{9},x_{10},x_{11},x_{12}$$   
respectively. 
If $C_1$ and $C_2$ are irreducible, then by Proposition \ref{prorem}, they can not contain any other point of $S$. In this case, we choose the polynomials $P,Q\in \mathcal X$ as in  Lemma \ref{1} and define $u=\frac{1}{2}\log(|P|^2+|Q|^2)$. Then $u$ satifies the first weight condition in Lemma \ref{2} and  result follows as in Case 1 above.

Hence we may assume that $C_1=l_1\cup C'$ where $l_1$ is a line and $C'$ is a conic. Since $m_3(S)=10$, $l_1\neq L.$ Then $l_1$ contains $3$ points of $S$ and $C'$ contains the remaining $6$ points of $C_1$. We have a similar decomposition for $C_2$. Then by a similar argument as in Case 2 of Lemma \ref{2}, we obtain  a plurisubharmonic function $u$ which satisfies one of the weight conditions in Lemma \ref{2}. It follows from  Proposition \ref{prop1} with $u$ that $\alpha<\frac{1}{3}$ which contradicts to the choice of $\alpha$. \\

\textbf{Case 3.} $m_3(S)=10$, $m_2(S)=7.$ Then $m_1(S)=3$ or $4$. Let $C$ be a conic with $7$ points, $x_1,\dots, x_{7}$ and $\tilde C$ be the conic through $x_8,\dots, x_{12}$.  First we assume that $C$ is irreducible. If $\tilde C$ is also irreducible, then we take $P_1=C\tilde C$ and $P_2$ a polynomial of degree $4$ which passes through the points of $S$ and linearly independent with $P_1$. It follows from Bezout's theorem that $P_1$ and $P_2$ have no common factor. Then $u=\frac{1}{2}\log (|P_1|^2+|P_2|^2)$ is a plurisubharmonic function with total weight $12$ at the points of $S$, $\gamma_u=4$ and locally bounded outside $S$.  Proposition \ref{prop1} with $u$ implies  that $\alpha<\frac{1}{3}$ which contradicts to the choice of $\alpha$. 

Thus, by rearranging the points of $S$,  we may assume that $\tilde C$ is reducible and  $x_{8},x_{9},x_{12}$ are collinear. We consider the conics $C_i$ passing through the points $x_8,x_9,x_{10},x_{11},x_{i}$ for $i=1,\dots,7.$ If $C_i$ is irreducible for some $i$, then we take $P_1=CC_i$ and a polynomial $P_2$ of degree $4$ which vanishes at the points of $S\setminus \{x_{12}\}$ and vanishes to second order  at $x_i$. Then it follows from Bezout's theorem that $P_1$ and $P_2$ have no common factor. Hence    
$u=\frac{1}{2}\log (|P_1|^2+|P_2|^2)$ is a plurisubharmonic function with total weight $12$ at the points of $S\setminus \{x_{12}\}$, $\gamma_u=4$ and locally bounded outside $S$. This implies with Proposition \ref{prop1} that $\alpha<\frac{1}{3}$. 

Now we assume that $C$ is irreducible and $C_i$ is reducible for all $i:1,\dots, 7.$ By $L_{i,j}$ we denote the line through $x_i$ and $x_j$.   Since $m_3(S)=10$ and   $x_8,x_9,x_{12}$ are collinear, the line $L_{8,9}$ does not contain either of the points $x_{10}$ or $x_{11}$. For the moment, we also assume that $L_{10,11}$ does not contain $x_8,x_9,x_{12}$. Then $m_1\{x_8,x_9,x_{10},x_{11}\}=2$, each $x_i$, $i=1,\dots,7$, belongs to $L_{jk}$ for some $8 \leq j,k\leq 11.$ 

By (P1), we refer to the property that $L_{8,9}$ intersects $C$ only at $x_7$ and $L_{10,11}$  possibly intersects $C$ only at $x_7$. Then we take $P_1=CL_{8,9}L_{10,11}$ and $P_2$ is a polynomial of degree $4$ linearly independent with $P_{1}$, which vanishes at the points of $S\setminus\{x_{12}\}$ and vanishes to second order at $x_7$. Then $P_1$ and $P_2$ have no common factor. Hence $u=\frac{1}{2}\log (|P_1|^2+|P_2|^2)$ is a plurisubharmonic function with total weight $12$ at the points of $S\setminus\{x_{12}\}$, $\gamma_u=4$, $u$ is locally bounded outside $S$ and we are done.

By (P2), we refer to the property that $L_{8,9}$ intersects $C$ only at $x_7$ and $L_{10,11}$   intersects $C$ only at $x_6$. Then we take $P_1=CL_{8,9}L_{10,11}$ and $P_2$ is a polynomial of degree $4$  linearly independent with $P_{1}$, which vanishes at the points of $S\setminus\{x_{12}\}$ and vanishes to second  order at $x_7$. Then $P_1$ and $P_2$ have no common factor and the result follows as above.

By (P3), we refer to the property that $L_{8,10}$ intersects $C$  at $x_6,x_7$ and $L_{9,11}$   intersects $C$ only at $x_5$. Then we take $P_1=CL_{8,10}L_{9,11}$ and $P_2$ is a polynomial of degree $4$  linearly independent with $P_{1}$, which vanishes  at the points of $S\setminus\{x_{12}\}$ and vanishes to second order at $x_5$. Then $P_1$ and $P_2$ have no common factor and the result follows as above.

Note that in $(P1),(P2),(P3)$,  we may replace the point $x_9$ with $x_{12}$ and the same conclusion holds. Thus we may assume that the configurations $(P1)$, $(P2)$ and $(P3)$ do not hold for the lines $L_{jk}$ where $j,k\in\{8,9,10,11\}$ and $j,k\in \{8,10,11,12\}$.

Consider the pairs of lines $(L_{8,9},L_{10,11}), (L_{8,10},L_{9,11}), (L_{8,11},L_{9,10}).$ 
Then each pair contains at most $3$ points of $\{x_1,\dots ,x_7\}\subset C$ since $m_2(S)=7$. Since $m_1\{x_8,x_9,x_{10},x_{11}\}=2$, each point $x_i$, $i=1,\dots,7$, belongs to a unique pair of lines. If the pair $(L_{8,9},L_{10,11})$ contains only one point, say $x_7$ of $C\cap S$ then property (P1) holds. Otherwise, since $m_2(S)=7$,  we may assume that the pair $(L_{8,9},L_{10,11})$ contains $2$ points, say $x_1,x_2$. If $x_1$ and $x_2$ lie on different lines, then property (P2) holds. Thus we may assume that $x_1,x_2\in L_{10,11}$. Note that one of these points may belong to $L_{8,9}$ as well. One of the remaining pairs of lines, say $(L_{8,10},L_{9,11})$ must contain $3$ points of $C\cap S$. Assuming that (P3) does not hold, we may assume that $x_6,x_7\in L_{8,10}$ and $x_5,x_7\in L_{9,11}$. We consider the similar arrangement for the lines  $L_{jk}$ where $j,k\in \{8,10,11,12\}$. Then either the pair $(L_{8,11},L_{10,12})$ or $(L_{8,10}, L_{11,12})$  must contain $3$ points of $C\cap S$.

First we assume that the first pair $(L_{8,11},L_{10,12})$ contains  $3$ points of $C\cap S$. As (P3) does not hold, we may assume that $x_3,x_4\in L_{8,11}$ and $x_4,x_5\in L_{10,12}$.  Then each line $L_{9,10}$ and $L_{11,12}$ may contain only one point of $C\cap S$, say $x_3$ and $x_6$ respectively. But then (P2) holds for the lines $L_{9,10}$ and $L_{11,12}$.

Similarly, we assume now  that  the other pair $(L_{8,10},L_{11,12})$ contains  $3$ points of $C\cap S $.
Say $x_6,x_7\in L_{8,10}$ and $x_4,x_6\in L_{11,12}$. Since (P1) and (P2) do not hold for the pair $(L_{8,11},L_{10,12})$,  $x_3\in L_{10,12}$ and $x_5\in L_{10,12}\cap L_{9,11}.$ As $x_3,x_9,x_{10},x_{12}$ are not collinear, $x_3\notin L_{9,10}$. Then  $x_4\in L_{9,10}$ and (P1) or (P2) (if $x_3\in L_{8,11}$)  holds for the pair $(L_{9,10},L_{8,11})$. Hence we are done in the case that $C$ is irreducible and $L_{10,11}$ does not contain any of the points $x_8,x_9,x_{12}$. 

Now we assume that the line $L_{10,11}$ contains $x_{12}$. Since $m_1(S)\leq 4$, each line $L_{8,9}$ and $L_{10,11}$ contains at most $1$ point of $C\cap S$ and hence (P1) or (P2) holds for the pair $(L_{8,9},L_{10,11})$.

Finally, we may assume that there does not exist any irreducible conic containing $7$ points of $S$. Then $m_1(S)=4$. We will list all possible lines containing $4$ points of $S$. Since $m_2(S)=7$, any such two lines must intersect at a point of $S$. Since $|S|=12$,  there are at most $3$ such lines passing through at any point $x_i \in S$. 

First we consider the case that there does not exist any point in $S$ through which $3$ such lines pass. Then considering the above restrictions, there are at most $5$ lines containing $4$ points of $S$ (see Figure~\ref{fig2}).
\begin{figure}
\begin{center} 
\scalebox{0.5}
{\includegraphics{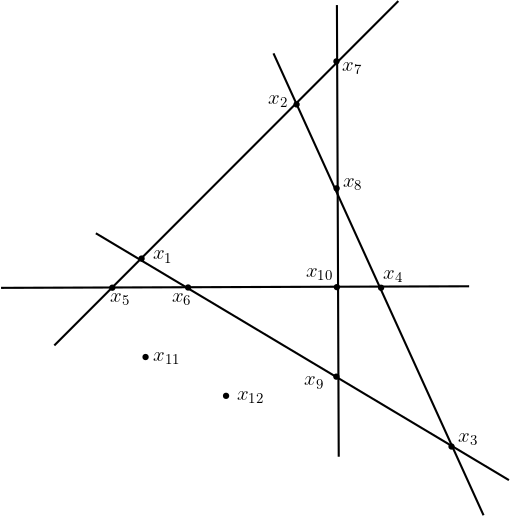}}
\caption{}\label{fig2} 
\end{center}
\end{figure}

We consider the cubics $C_1$ and $C_2$ passing through the points of $S\setminus \{x_2,x_6,x_{10}\}$ and $S\setminus\{x_5,x_8,x_9\}$. Then the result follows by a similar argument to the one in Case 4 of Lemma \ref{2}. 

Now we consider the case that there is only one point, say $x_1$, through which $3$ lines containing $4$ points of $S$ pass.  We may assume that the points in the sets $\{x_1,x_2,x_3,x_4\}$,        $\{x_1,x_5,x_6,x_7\}$ and $\{x_1,x_8,x_9,x_{10}\}$ are collinear. Then there are at most $5$ lines  containing $4$ points of $S$ (see Figure ~\ref{fig3}).
\begin{figure}
\begin{center} 
\scalebox{0.55}
{\includegraphics{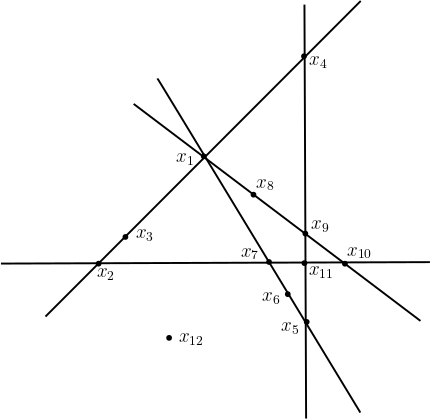}}
\caption{}\label{fig3} 
\end{center}
\end{figure}
We consider the cubics  $C_1$ and $C_2$ passing through the points of $S\setminus \{x_1,x_4,x_{11}\}$ and $S\setminus\{x_2,x_5,x_{10}\}$. Then the result follows by a similar argument to the one in Case 4 of Lemma \ref{2}.

Finally, we consider the case that there is more than one point through which $3$ lines containing $4$ points of $S$ pass.  Then, other than $x_1$, there is a point of $S$ through which $3$ lines containing $4$ points of $S$ pass. This point can be a point on a line through $x_1$ or a point outside of the $3$ lines through $x_1$. 

In the first case, we may take this point as $x_2$ (see Figure ~\ref{fig4}).
\begin{figure}
\begin{center} 
\scalebox{0.55}
{\includegraphics{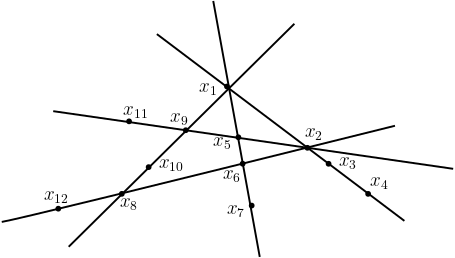}}
\caption{}\label{fig4} 
\end{center}
\end{figure}
It is easy to check that there are no $4$ collinear points in the sets $S\setminus \{x_2,x_7,x_{10}\}$ and $S\setminus\{x_1,x_{11},x_{12}\}$. Consider the cubics  $C_1$ and $C_2$ passing through the points of $S\setminus \{x_2,x_7,x_{10}\}$ and $S\setminus\{x_1,x_{11},x_{12}\}$. Then the result follows by a similar argument to the one in Case 4 of Lemma \ref{2}.

In the second case, we may assume that $3$ lines containing $4$ points of $S$ pass through $x_{11}$ (see Figure ~\ref{fig5}).
\begin{figure}
\begin{center} 
\scalebox{0.52}
{\includegraphics{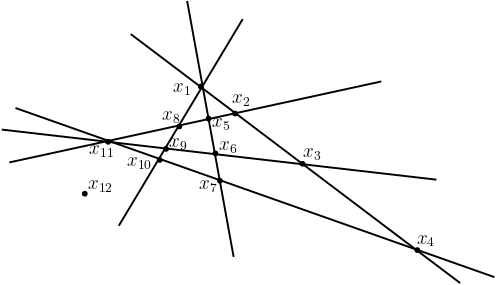}}
\caption{}\label{fig5} 
\end{center}
\end{figure}
If there does not exist any other line containing $4$ points of $S$, then we take the cubics $C_1$ and $C_2$ passing through the points of $S\setminus \{x_4,x_5,x_{9}\}$ and $S\setminus\{x_1,x_{11},x_{12}\}$. Then the result follows by a similar argument to the one in Case 4 of Lemma \ref{2}. Otherwise, we may assume that $x_2,x_6,x_{10},x_{12}$ are collinear and return to the first case.

\textbf{Case 4.} $m_3(S)=11$. In this case, $m_1(S)=4$ and $m_2(S)=7$. Let $\Gamma$ be the cubic with $11$ points of $S$. Then by Proposition \ref{prorem}, $\Gamma$ must be reducible, say $\Gamma=C\cup L$.  $C$ must be irreducible, otherwise one of the components of $C$ together with $L$ would contain $8$ points of $S$ which contradicts to $m_2(S)=7$.  
We may assume that the points $x_1,\dots,x_7$ are contained in $C$ and $x_8,\dots,x_{11}$ are contained in $L$. Since we assumed that $|V_{1/3}(T)\setminus \gamma|>1$ for any cubic  $\gamma$,  there is a point $p\in V_{1/3}(T)\setminus (\Gamma\cup x_{12})$. Let $L_{p,12}$ denote the line which passes through $p$ and $x_{12}$. First we assume that $L_{p,12}\cap L\cap S=\emptyset.$ We take $P_1=CL_{p,12}L$ and $P_2$  a polynomial of degree $4$, linearly independent with $P_{1}$, through the points of $S\cup \{p\}$. It follows from Bezout's theorem that $P_1$ and $P_2$ have no common factor. Thus $u=\frac{1}{2}\log (|P_1|^2+|P_2|^2)$ is a plurisubharmonic function with total weight $13$ at the points of $S\cup \{p\}$  locally bounded outside $S$ and $\gamma_u=4$. Proposition \ref{prop1} with $u$ implies that $\alpha<\frac{4}{13}$ which contradicts to the choice of $\alpha$.\\

Now we consider the case that $L_{p,12}$ intersects $L$ at a point of  $S$ and $L_{p,12}\cap C \cap S=\emptyset.$ Then the same polynomials $P_1$ and $P_2$ above work. \\

Finally, we consider the case that $L_{p,12}\cap L\cap S\neq \emptyset$ and $L_{p,12}\cap C\cap S\neq \emptyset$. Say $x_1\in L_{p,12}\cap C$ and $x_{11}\in L_{p,12}\cap L$. Then take $P_1=CLL_{p,12}$ and $P_2$ a polynomial of degree $4$, linearly independent with $P_{1}$,  which vanishes at the points of $(S\cup\{p\})\setminus \{x_{10},x_{11}\}$ and vanishes to the second order  at $x_1$. Then $P_1$ and $P_2$ have no common factor and the result follows as above.\\

\begin{rem} We should note that a slightly better conclusion follows from the proof of  Theorem \ref{0}. To get a contradiction in the proof, we assumed that $V_{1/3}(T)$ contains a subset $S$ with $12$ points and $|V_{1/3}(T) \setminus C|> 1 $ for any cubic $C$. Hence if   $V_{1/3}(T)$ is finite then $|V_{1/3}(T)|=11$ or $|V_{1/3}(T) \setminus C|\leq 1 $ for some cubic $C$.   
\end{rem}

\end{proof}

\noindent \textbf{Acknowledgments.} We would like to thank to the referee for his/her careful reading and suggestions. The second author is supported by T\"UB\.ITAK 3501 Proj. No 120F084.


\begin{thebibliography}{18}


\bibitem{Com} Coman, D.
Entire Pluricomplex Green Functions and Lelong Numbers of Projective Currents. 
{\em Proc. Amer. Math. Soc.} 134 (2005), 1927-1935. 

\bibitem{CN}
Coman, Dan; Nivoche, S., Plurisubharmonic functions with singularities and affine invariants for finite sets in $\mathbb C^n$. {\em Math. Ann.} 322 (2002), no. 2, 317-332

\bibitem{CT} Coman,D.,Truong,T.T.:Geometric properties of upper level sets of Lelong numbers on projective spaces. {\em Math. Ann.} 361, 981-994 (2015).
 
\bibitem{Dem}
Demailly, J.P., Monge-Amp\'ere operators, Lelong numbers and intersection theory, in {\em Complex analysis and geometry,} pp. 115-193. Plenum, New York (1993)

\bibitem{Hef1}
Heffers, J.J., A property of upper level sets of Lelong numbers of currents on $\mathbb P^2$. {\em Int. J. Math.} 28(14), 1750110, 18 (2017).

\bibitem{Hef2} Heffers, J.J., On Lelong numbers of positive closed currents on $ \mathbb P^n$. {\em Complex Var. Elliptic Equ.} 64, 352-360 (2019).


\bibitem{Siu}
Siu,Y.T., Analyticity of sets associated to Lelong numbers and the extension of closed positive currents.
{\em Invent. Math.} 27, 53-156 (1974).


\end{thebibliography}
\end{document}